\documentclass[a4paper,10pt]{article}
\usepackage[utf8]{inputenc}
\usepackage{amsmath}
\usepackage{amsthm}
\usepackage{amsfonts}
\usepackage{amssymb}
\usepackage{mathrsfs}
\usepackage{comment}
\usepackage{geometry}
\usepackage{tikz}
\usetikzlibrary{automata,positioning}
\title{Explicit Implicit Function Theorem for All Fields}
\author{YINING HU \\
CNRS, Institut de Math\'ematiques de Jussieu-PRG \\
Universit\'e Pierre et Marie Curie, Case 247 \\
4 Place Jussieu \\
F-75252 Paris Cedex 05 (France) \\
{\tt yining.hu@imj-prg.fr}}
\date{}
\begin{document}

\maketitle

\begin{abstract}
We give an explicit implicit function theorem for formal power series that is valid for all fields, which implies in particular Lagrange inversion formula and
and Flajolet-Soria coefficient extraction formula known for fields of characteristic 0.
\end{abstract}
\newtheorem{thm}{Theorem}
\newtheorem{prop}{Proposition}
\newtheorem{coro}{Corollary}
\theoremstyle{definition}
\newtheorem{defi}{Definition}
\newtheorem{remark}{Remark}
\begin{thm}\label{main}
Let $K$ be an arbitrary field. If $P(X,Y)\in K[[X,Y]]$ and $f(X)\in K[[X]]$ are such that $f(0)=0$, $P(X,f(X))=f(X)$ and $P'_Y(0,0)=0$.
 Then $$[X^n]f = \sum _{m\geq 1} [X^n Y^{m-1}](1-P'_Y(X,Y))P^m(X,Y).$$
 If the characteristic of $K$ is $0$, we also have the following form
 $$[X^n]f=\sum _{m\geq 1}\frac{1}{m}[X^n Y^{m-1}]P^m(X,Y).$$
\end{thm}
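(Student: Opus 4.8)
The plan is to read the right-hand side as a formal residue and evaluate it by factoring $Y-P$. Recognizing $[Y^{m-1}]H=[Y^{-1}](Y^{-m}H)$, I would first record that the sum is well defined: since $P(0,0)=0$ and $P'_Y(0,0)=0$ force $P=p(X)+Y\rho(X,Y)$ with $p\in(X)$ and $\rho\in(X,Y)$, a monomial count shows $[X^nY^{m-1}]\big((1-P'_Y)P^m\big)=0$ once $m$ is large relative to $n$ (concretely $m>2n-1$), so for each fixed $n$ only finitely many $m$ contribute. I then set $g(X):=\sum_{m\ge1}[Y^{m-1}]\big((1-P'_Y)P^m\big)$ and aim to prove $g=f$ directly; note $g(0)=0$.

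Next I would pass to the ring $R=K((Y))[[X]]$ with its residue map $\mathrm{Res}_Y:R\to K[[X]]$, $\mathrm{Res}_Y(u)=\sum_j\big([X^jY^{-1}]u\big)X^j$. The element $S:=\sum_{k\ge0}P^kY^{-k-1}$ is well defined in $R$ coefficientwise (each monomial $X^jY^l$ receives finitely many contributions, again by the hypotheses on $P$), and the telescoping $(Y-P)S=1$ identifies $S=(Y-P)^{-1}$. Summing the geometric series gives $\sum_{m\ge1}P^mY^{-m}=YS-1$, and since only finitely many $m$ meet each coefficient $X^nY^{-1}$, the residue distributes over the sum; using $\mathrm{Res}_Y(1-P'_Y)=0$ this yields the compact form $g=\mathrm{Res}_Y\big[Y(1-P'_Y)(Y-P)^{-1}\big]$.

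The heart of the argument is the factorization $Y-P(X,Y)=(Y-f(X))\,W(X,Y)$ with $W\in K[[X,Y]]$ and $W(0,0)=1$, obtained from $P(X,f)=f$ by extracting the factor $Y-f$ from $G(X,Y)-G(X,f)$ where $G=Y-P$; the value $W(0,0)=1$ makes $W$ a unit. Since $1-P'_Y=\partial_Y G=W+(Y-f)\partial_Y W$, I get $Y(1-P'_Y)(Y-P)^{-1}=Y(Y-f)^{-1}+Y\,(\partial_Y W)\,W^{-1}$. Here is the decisive, characteristic-free point: $W$ is a unit of $K[[X,Y]]$, so $Y(\partial_Y W)W^{-1}$ lies in $K[[X,Y]]$ and carries no negative power of $Y$, whence its residue vanishes — this replaces the classical $\log$/integration-by-parts step and needs no division by integers. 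Meanwhile $(Y-f)^{-1}=\sum_{k\ge0}f^kY^{-k-1}$ (genuinely $X$-adically convergent since $f\in(X)$), so $\mathrm{Res}_Y\big[Y(Y-f)^{-1}\big]=f$. Therefore $g=f$, which is the first formula.

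For the characteristic-$0$ statement I would not rerun the residue argument but deduce it from the first formula. Differentiating $P^{m+1}$ gives the universal identity $(m+1)\,[X^nY^{m-1}](P'_YP^m)=m\,[X^nY^{m}]P^{m+1}$; writing $a_m=[X^nY^{m-1}]P^m$ and dividing by $m+1$ (now permissible) turns the finite sum $\sum_m\big(a_m-[X^nY^{m-1}]P'_YP^m\big)$ into a telescoping sum collapsing to $\sum_{m\ge1}\frac1m a_m$, which is exactly $[X^n]$ of the second formula. The main obstacle throughout is the first paragraph's bookkeeping together with the factorization: one must run the whole residue calculus formally — summability monomial-by-monomial rather than in a metric topology — and isolate the single step, vanishing of the $W$-residue, that would otherwise conceal a forbidden division by $m$.
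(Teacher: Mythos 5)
Your argument is correct, and its skeleton coincides with the paper's: both hinge on the factorization $Y-P(X,Y)=(Y-f(X))W(X,Y)$ with $W$ a unit (this is exactly the paper's Corollary \ref{factorize}, whose justification --- convergence of $\sum_j a_j(X)\,(Y^j-f^j)/(Y-f)$ in $K[[X,Y]]$, using $f(0)=0$ --- is where the paper invests its real effort via Proposition \ref{taylor}, and which you should spell out with the same care), followed by the logarithmic-derivative split $(1-P'_Y)(Y-P)^{-1}=(Y-f)^{-1}+(\partial_Y W)W^{-1}$ and the observation that the second summand is a genuine power series and hence contributes nothing. Where you genuinely diverge is in the coefficient-extraction formalism: the paper routes everything through Furstenberg's diagonal operator $\mathscr{D}$ applied to $Y^2Q'_Y(XY,Y)/Q(XY,Y)$ (Proposition \ref{furst}), whereas you work directly in $K((Y))[[X]]$ with the residue $\mathrm{Res}_Y$, inverting $Y-P$ by the telescoping series $S=\sum_k P^kY^{-k-1}$. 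The two devices are equivalent (the substitution $X\mapsto XY$ followed by the diagonal is precisely a residue in disguise), but your version avoids introducing the two-variable diagonal and the auxiliary Proposition \ref{furst} altogether, at the cost of having to verify summability monomial by monomial in the Laurent-series ring, which you do. You also supply two things the paper leaves implicit: the explicit bound $m\leq 2n-1$ showing the sum is finite (the paper only remarks on finiteness), and, more substantially, a derivation of the characteristic-zero form from the characteristic-free one via the identity $(m+1)[X^nY^{m-1}](P'_YP^m)=m[X^nY^m]P^{m+1}$ and telescoping --- the paper states that formula in Theorem \ref{main} but its proof never returns to it, so your last paragraph actually fills a gap in the paper rather than duplicating it.
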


\begin{remark}
The conditions $P(X,f(X))=0$ and $f(0)=0$ imply that $P(0,0)=0$. As $P'_Y(0,0)$ is also $0$, the sums in both expressions of $[X^n]f$ are finite.
\end{remark}
\begin{remark}
When $P(X,Y)=X\phi(Y)$, where $\phi(X)\in K[[X]]$ and $\phi(0)\neq 0$, we obtain the Lagrange inversion formula
$$[X^n]f=[Y^{n-1}](\phi(X)^n-Y\phi'(X)\phi(X)^{n-1}).$$
If the characteristic of $K$ is $0$, we also have the following form
$$[X^n]f=\frac{1}{n}[Y^{n-1}]\phi(Y)^n.$$
\end{remark}
\begin{remark}
When $P(X,Y)$ is a polynomial in $X$ and $Y$, we obtain a generalisation of Flajolet-Soria coefficient extraction formula \cite{soria}. 
\end{remark}

\begin{defi}
 Let $P(X,Y)\in K[[X,Y]]$, 
 $$P(X,Y)=\sum\limits_{j=0}^{\infty}a_{j}(x)Y^j,  $$
 where $a_j(x)\in K[[X]]$.
 We define 
 
$$P^{[m]}(X,Y)=\sum\limits_{j=m}^{\infty}\binom{j}{m}a_j(x)Y^{j-m}. $$
 for $m\in\mathbb{N}$.
  \end{defi}
The motivation of the definition of $P^{[m]}$ is to avoid the factorials in the denominators in the Taylor series, which do not make sense in positive characteristic.
Once this obstacle is circumvented, the Taylor formula works as expected.

\begin{prop}\label{taylor}
Let $K$ be an arbitrary field. Let $P(X,Y)\in K[[X,Y]]$ and $f(X)\in K[[X]]$ with  $f(0)=0$. Then 
\begin{equation}
 P(X,Y)=\sum\limits_{m=0}^{\infty}(Y-f(X))^m P^{[m]}(X,f(x)).\tag{*}
\end{equation}

\end{prop}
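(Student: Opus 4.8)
The plan is to recognise $(*)$ as the binomial theorem applied term by term, so that the entire content reduces to justifying a rearrangement of infinite sums in the complete local ring $K[[X,Y]]$. First I would check that both sides are well-defined. Since $f(0)=0$ we have $f(X)\in(X)$, hence for fixed $m$ the substitution $Y\mapsto f(X)$ in $P^{[m]}$ involves only finitely many terms modulo each power of $X$, giving
$$P^{[m]}(X,f(X))=\sum_{j\ge m}\binom{j}{m}a_j(X)f(X)^{j-m}\in K[[X]].$$
Moreover $Y-f(X)\in(X,Y)$, so $(Y-f(X))^m\in(X,Y)^m$, and the outer sum over $m$ on the right of $(*)$ converges in the $(X,Y)$-adic topology.

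The main step is to regard the right-hand side as the total sum of the doubly indexed family
$$t_{m,j}:=\binom{j}{m}\,a_j(X)\,(Y-f(X))^m f(X)^{j-m},\qquad 0\le m\le j,$$
and to establish that this family is summable. The key estimate is that each $t_{m,j}$ lies in $(X,Y)^j$: indeed $(Y-f(X))^m\in(X,Y)^m$ and $f(X)^{j-m}\in(X)^{j-m}\subseteq(X,Y)^{j-m}$, so their product lies in $(X,Y)^{j}$. Consequently, for any $N$ every term with $j\ge N$ lies in $(X,Y)^N$, so only the finitely many pairs with $j<N$ can contribute modulo $(X,Y)^N$; this is exactly summability in the complete ring $K[[X,Y]]$, and it licenses summing the family in any order or grouping.

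It then remains to evaluate the total sum $\sum_{0\le m\le j}t_{m,j}$ in the two natural groupings. Grouping by $m$ first reproduces $\sum_{m\ge0}(Y-f(X))^mP^{[m]}(X,f(X))$, the right-hand side of $(*)$. Grouping by $j$ first gives, for each fixed $j$,
$$a_j(X)\sum_{m=0}^{j}\binom{j}{m}(Y-f(X))^m f(X)^{j-m}=a_j(X)\bigl((Y-f(X))+f(X)\bigr)^{j}=a_j(X)Y^{j}$$
by the binomial theorem (the integers $\binom{j}{m}$ being read in the prime ring of $K$), and summing over $j$ returns $P(X,Y)$. Equating the two evaluations yields $(*)$.

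I expect the only genuine obstacle to be the summability and rearrangement justification: once the uniform estimate $t_{m,j}\in(X,Y)^j$ is in place the two groupings are forced, and the characteristic-free binomial theorem does the rest. A cleaner but equivalent alternative would be to work modulo $(X,Y)^N$ throughout, where both sides become finite sums and $(*)$ is an honest polynomial identity, and then let $N\to\infty$.
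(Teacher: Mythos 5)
Your proof is correct, but it takes a genuinely different route from the one printed in the paper. The paper fixes $k$ and compares the coefficient of $Y^k$ on the two sides of $(*)$: after interchanging the sums over $m$ and $j$ it reduces the claim to the identity $\sum_{m=k}^{j}\binom{j}{j-m,m-k,k}(-1)^{m-k}=\delta_{jk}$, which it proves by specialising the trinomial expansion of $(a+b+c)^j$ at $a=1$, $b=-1$. You instead work directly in $K[[X,Y]]$ with the doubly indexed family $t_{m,j}$, justify summability via the uniform estimate $t_{m,j}\in(X,Y)^j$, and then collapse the $j$-grouped sums with the plain binomial theorem $\sum_{m=0}^{j}\binom{j}{m}(Y-f)^{m}f^{j-m}=Y^{j}$. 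The two arguments are the same computation viewed from opposite ends --- the paper's alternating multinomial identity is exactly what your binomial step becomes after extracting $[Y^k]$ --- but your version makes the convergence and rearrangement issues explicit (the paper's interchange of $\sum_m$ and $\sum_j$ inside $[Y^k]$ is left implicit), while the paper's version stays entirely at the level of coefficient identities and needs no appeal to $(X,Y)$-adic summability. Both are valid in every characteristic since only integer binomial coefficients appear. Your closing suggestion of truncating at $Y^d$ (or modulo $(X,Y)^N$) and passing to the limit is also sound, and is in fact essentially the alternative argument the author drafted but left commented out in the source.
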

\begin{proof}
Let $P(X,Y)=\sum\limits_{j=0}^{\infty} a_j(X) Y^j$ with $a_j(X)\in K[[X]]$ for $j\in\mathbb{N}$. We prove that for all $k\in\mathbb{N}$,
the coefficient of $Y^k$ in the left side and right side of $(*)$ is equal. Indeed, we have
\begin{align*}
 &[Y^k]\sum\limits_{m=0}^{\infty}(Y-f(X))^m P^{[m]}(X,f(x))\\
 =&\sum\limits_{m=k}^{\infty}\binom{m}{k}(-f(X))^{m-k}\sum\limits_{j=m}^{\infty}\binom{j}{m}a_j(x)f(X)^{j-m}\\
 =&\sum\limits_{j=k}^{\infty}a_j(X)f(X)^{j-k}\sum\limits_{m=k}^{j}\binom{j}{m}\binom{m}{k}(-1)^{m-k}\\
 =&\sum\limits_{j=k}^{\infty}a_j(X)f(X)^{j-k}\sum\limits_{m=k}^{j}\binom{j}{j-m,m-k,k}(-1)^{m-k}\\
 =&a_k(X).
\end{align*}
We have the last equality because $\sum\limits_{m=k}^{j}\binom{j}{j-m,m-k,k}(-1)^{m-k}=1$ if $j=k$ and $0$ if $j>k$. This is because we have the multinomial
expansion
\begin{align*}
 (a+b+c)^j&=\sum\limits_{k\leq m\leq j}\binom{j}{j-m,m-k,k}a^{j-m}b^{m-k}c^k\\
 &=\sum\limits_{k=0}^j\sum\limits_{m=k}^{j}\binom{j}{j-m,m-k,k}a^{j-m}b^{m-k}c^k\\
\end{align*}
When we take $a=1$, $b=-1$, the identity becomes
$$c^j=\sum\limits_{k=0}^j\left(\sum\limits_{m=k}^{j}\binom{j}{j-m,m-k,k}(-1)^{m-k}\right)c^k.$$
Seeing this as a polynomial identity in the variable $c$ gives us the desired result.
\end{proof}
The following corollary is immediate.
\begin{coro}\label{factorize}
 Let $K$ be an arbitrary field. Let $Q(X,Y)\in K[[X,Y]]$ and $f(X)\in K[[X]]$ be such that $f(0)=0$ and $Q(X,f(X))=0$. Then there exists $R(X,Y)\in K[[X,Y]]$
 such that $Q(X,Y)=(Y-f(X))R(X,Y)$.
\end{coro}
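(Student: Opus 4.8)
The plan is to apply the Taylor formula of Proposition~\ref{taylor} to $Q$ in place of $P$ and to use the hypothesis $Q(X,f(X))=0$ to kill the term indexed by $m=0$, leaving a series manifestly divisible by $Y-f(X)$. First I would record that $Q^{[0]}(X,Y)=\sum_{j\ge 0}\binom{j}{0}a_j(X)Y^{j}=Q(X,Y)$, so the $m=0$ summand of~$(*)$ is $Q^{[0]}(X,f(X))=Q(X,f(X))=0$. Hence $(*)$ reads
$$Q(X,Y)=\sum_{m\ge 1}(Y-f(X))^m Q^{[m]}(X,f(X))=(Y-f(X))\sum_{m\ge 1}(Y-f(X))^{m-1}Q^{[m]}(X,f(X)),$$
which suggests taking $R(X,Y)=\sum_{m\ge 1}(Y-f(X))^{m-1}Q^{[m]}(X,f(X))$.

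The only real content is to confirm that this $R$ is a well-defined element of $K[[X,Y]]$, i.e.\ that each coefficient $[X^aY^b]R$ is a finite sum. Here I would invoke $f(0)=0$: the $Y^b$-coefficient of $(Y-f(X))^{m-1}$ is $\binom{m-1}{b}(-f(X))^{m-1-b}$, of $X$-valuation at least $m-1-b$, while $Q^{[m]}(X,f(X))\in K[[X]]$ has nonnegative valuation. Thus the summand indexed by $m$ can reach $X^aY^b$ only for $b+1\le m\le a+b+1$, finitely many $m$. This is the same coefficientwise finiteness that already makes the right-hand side of~$(*)$ meaningful, so no new estimate is needed.

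To conclude I would multiply $R$ by $Y-f(X)$ term by term; since multiplication by a fixed power series distributes over such coefficientwise-finite sums, this returns $\sum_{m\ge 1}(Y-f(X))^m Q^{[m]}(X,f(X))=Q(X,Y)$, giving $Q(X,Y)=(Y-f(X))R(X,Y)$. I expect no genuine obstacle: the corollary is "immediate" precisely because the vanishing of the $m=0$ term turns Proposition~\ref{taylor} into the desired factorization, and everything else is formal bookkeeping about orders in $X$ and $Y$.
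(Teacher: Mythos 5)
Your proof is correct and is exactly the argument the paper has in mind when it calls the corollary ``immediate'' from Proposition~\ref{taylor}: the hypothesis $Q(X,f(X))=0$ kills the $m=0$ term of the expansion $(*)$, and the factor $Y-f(X)$ can be pulled out of the remaining sum. Your extra check that $R$ is a well-defined element of $K[[X,Y]]$ (finitely many $m$ contribute to each coefficient, thanks to $f(0)=0$) is the same convergence bookkeeping that already underlies $(*)$, so nothing new is needed.
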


\begin{defi}
For the formal power series in $K ((X_1,...,X_m)) $
$$P(X_1,X_2,...,X_m)=\sum_{n_i>-\mu}a_{n_1n_2...n_m}X_1^{n_1}X_2^{n_2}...X_m^{n_m}$$
its (principal) diagonal $\mathscr{D}f(t)$ is defined as the element in $\kappa((T)) $
$$\mathscr{D}P(T)=\sum a_{nn...n}T^n.$$
\end{defi}

The following proposition is a generalization of Proposition 2 from \cite{fu}, the only difference in the proof is in the first step where we use Corollary
\ref{factorize} to factorize $Q(X,Y)$.

\begin{prop}\label{furst}
 Let $K$ be an arbitrary field. Let $Q(X,Y)\in K[[X,Y]]$ and $f(X)\in K[[X]]$ be such that $f(0)=0$, $Q(X,f(X))=0$ and $Q'_Y(0,0)\neq 0$, then
 $$f(X)=\mathscr{D}\left(Y^2 \frac{Q'_Y(XY,Y)}{Q(XY,Y)}\right).$$
\end{prop}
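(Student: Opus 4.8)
The plan is to reduce everything to the factorization furnished by Corollary \ref{factorize} and then to compute the diagonal of the resulting logarithmic derivative termwise, following the structure of Proposition 2 of \cite{fu} but with the factorization step made characteristic-free.

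First I would invoke Corollary \ref{factorize} to write $Q(X,Y)=(Y-f(X))R(X,Y)$ with $R\in K[[X,Y]]$. Evaluating at $X=Y=0$ and using $f(0)=0$ gives $R(0,0)=Q'_Y(0,0)\neq 0$, so $R$ is a unit in $K[[X,Y]]$ and $R'_Y/R$ is a genuine power series. Differentiating the factorization with respect to $Y$ and dividing by $Q$ yields the partial-fraction-type identity
$$\frac{Q'_Y(X,Y)}{Q(X,Y)}=\frac{1}{Y-f(X)}+\frac{R'_Y(X,Y)}{R(X,Y)}.$$
I would then substitute $X\mapsto XY$, multiply by $Y^2$, and treat the two summands separately, checking first that each lands in $K[[X,Y]]$ so that $\mathscr{D}$ applies and distributes over the sum.

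For the second summand, writing $R'_Y/R=\sum_{i,j\geq 0}b_{ij}X^iY^j$, the substitution $X\mapsto XY$ together with the factor $Y^2$ produces only monomials $X^iY^{i+j+2}$, whose $Y$-exponent strictly exceeds their $X$-exponent; hence no diagonal term survives and $\mathscr{D}$ annihilates this piece. For the first summand I would factor $Y-f(XY)=Y(1-h(X,Y))$ with $h(X,Y)=\sum_{k\geq 1}c_kX^kY^{k-1}$ and $h(0,0)=0$, where $f(X)=\sum_{k\geq 1}c_kX^k$, so that $Y^2/(Y-f(XY))=Y/(1-h)=\sum_{\ell\geq 0}Yh^\ell$ is a well-defined element of $K[[X,Y]]$. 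Expanding $h^\ell$ as a sum over tuples $(k_1,\dots,k_\ell)$ with each $k_i\geq 1$ gives monomials $X^{\sum k_i}Y^{\sum k_i-\ell+1}$, so a term on the diagonal forces $\ell=1$; the $\ell=1$ contribution is exactly $\sum_{k\geq 1}c_kX^kY^k$, whose diagonal is $f(T)$. Combining the two evaluations gives the claimed formula.

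The main obstacle will be the bookkeeping that keeps the diagonal operator applied to bona fide elements of $K[[X,Y]]$ and justifies distributing $\mathscr{D}$ over the two summands: one must fix the correct geometric-series expansions of $1/(Y-f(XY))$ and $1/R(XY,Y)$ before any coefficient comparison. Once these expansions are pinned down, the two surviving computations are the elementary degree-counting arguments sketched above, and the argument is characteristic-free precisely because the only previously analytic ingredient, the factorization of $Q$, is now supplied by Corollary \ref{factorize}.
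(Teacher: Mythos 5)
Your proposal is correct and follows essentially the same route as the paper: factor $Q=(Y-f(X))R$ via Corollary \ref{factorize}, take the logarithmic derivative, substitute $X\mapsto XY$, multiply by $Y^2$, and observe that the $R$-term contributes nothing to the diagonal while the $1/(Y-f(XY))$ term contributes exactly $f$. Your version is slightly more explicit about why each piece lies in $K[[X,Y]]$ and notes the identity $R(0,0)=Q'_Y(0,0)$, but the substance is identical.
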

\begin{proof}
 Using Corollary \ref{factorize} we can write $Q(X,Y)=(Y-f(X))R(X,Y)$ with $R(X,Y)\in K[[X,Y]]$. We have $R(0,0)\neq 0$ because $f(0)=0$ and $Q'_Y\neq 0$. Then
 $$\frac{1}{Q(X,Y)}Q'_Y(X,Y)=\frac{1}{Y-f(X)}+\frac{R'_Y(X,Y)}{R(X,Y)}.$$
 Replacing $X$ by $XY$ and multiplying by $Y^2$ we get
 \begin{equation}\mathscr{D}\left(Y^2\frac{Q'_Y(XY,Y)}{Q(XY,Y)}\right)=\mathscr{D}\left(\frac{Y^2}{Y-f(XY)}\right)+\mathscr{D}\left(Y^2\frac{R'_Y(XY,Y)}{R(XY,Y)}\right).\tag{\dag}\end{equation}                                                                                                                                                                                 
 For the first term on the right side of (\dag)  we have 
 \begin{align*}
  &\mathscr{D}\left(\frac{Y^2}{Y-f(XY)}\right)\\=&\mathscr{D}\left(\frac{Y}{1-Y^{-1}f(XY)}\right)\\
  =&\mathscr{D}\left(\ \sum\limits_{n=0}^{\infty}Y^{-n+1} f(XY)^n \right)\\
  =&\mathscr{D}\left(f(XY)\right)\\
  =&f(X).
 \end{align*}
For the second term, as $R(0,0)\neq 0$, $\frac{R'_Y(XY,Y)}{R(XY,Y)}$ is a power series in $XY$ and $Y$, so when we multiply this by $Y^2$ there is no diagonal term.
 
\end{proof}

\begin{proof}[Proof of Theorem \ref{main}]

Let the power series $Q(X,Y)$ be defined as $Q(X,Y)=P(X,Y)-Y$, then $Q'_Y(0,0)=P'_Y(0,0)-1\neq 0$, and $Q(X, f(X))=P(X,f(X))-f(X)=0$. 
According to Proposition \ref{furst}, 
\begin{align*}f &= \mathscr{D}\{Y^2 Q'_Y(XY,Y)/Q(XY,Y) \} 
\\ &=\mathscr{D}\{Y ^2 (P'_Y(XY,Y)-1 )/(P(XY,Y)-Y) \}
\\&=\mathscr{D}\{Y (1-P'_Y(XY,Y) )/(1-\frac{P(XY,Y)}{Y}) \}
\\&=\mathscr{D}\{Y (1-P'_Y(XY,Y) )(1+\sum _{m\geq 1} (\frac{P(XY,Y)}{Y})^m) \}.
\end{align*} 
We have the last equality due to the fact that $P'_Y(0,0)=0$, $\frac{P(XY,Y)}{Y}$ has no constant term, 
and therefore $1/(1-\frac{P(XY,Y)}{Y})=1+\sum_{m\geq 1}(\frac{P(XY,Y)}{Y})^m$.
\\As in each term of $Y(1-P'_Y(XY,Y))$ the power of $Y$ is larger than that of $X$, 
it cannot contribute to the diagonal.
Therefore, 

\begin{align*}
f_n & =[X^n Y^n] Y (1-P'_Y(XY,Y) )(1+\sum_{m\geq 1}(\frac{P(XY,Y)}{Y})^m)
\\ &=[X^n Y^n] Y (1-P'_Y(XY,Y) )(\sum_{m\geq 1}(\frac{P(XY,Y)}{Y})^m)
\\ &=\sum_{m\geq 1}[X^n Y^{m-1}] (1-P'_Y(X,Y))P(X,Y)^m.
\end{align*}

\end{proof}

\end{document}